


\documentclass[12pt]{amsart}
\usepackage[top=0.9in, bottom=1in, left=1in, right=1in]{geometry}
\usepackage{amsmath, amssymb, amsthm}
\usepackage{verbatim}
\usepackage{graphicx}
\usepackage{enumerate}
\usepackage{mathrsfs}


\linespread{1.1}









\newtheorem{thm}{Theorem}[section]

\newtheorem{lem}[thm]{Lemma}
\newtheorem{cor}[thm]{Corollary}




\def\XXint#1#2#3{{\setbox0=\hbox{$#1{#2#3}{\int}$ }
\vcenter{\hbox{$#2#3$ }}\kern-.6\wd0}}





%



\theoremstyle{definition}
\newtheorem{definition}[thm]{Definition}
\newtheorem{example}[thm]{Example}




\theoremstyle{remark}

\newtheorem{remark}[thm]{Remark}


\numberwithin{equation}{section}

\newtheoremstyle{ser}
{8pt}
{8pt}
{\it}
{}
{\sf}
{:}
{6mm}
{}

\newtheoremstyle{serr}
{8pt}
{8pt}
{\normalfont}
{}
{\sf}
{.}
{6mm}
{}

\theoremstyle{ser}

\theoremstyle{serr}







\begin{document}


\title[Determinant preserving maps]{On a class of determinant preserving maps for finite von Neumann algebras}


\author{Marcell Ga\'al}
\address{Bolyai Institute, Functional Analysis Research Group, University of Szeged, H-6720 Szeged, Aradi v\'ertan\'uk tere 1, Hungary}
\email{marcell.gaal.91@gmail.com}

\author{Soumyashant Nayak}
\address{Smilow Center for Translational Research, University of Pennsylvania, Philadelphia, PA 19104}
\email{nsoum@upenn.edu}
\urladdr{https://nsoum.github.io/} 


\begin{abstract}
Let $\mathscr{R}$ be a finite von Neumann algebra with a faithful tracial state $\tau $ and let $\Delta$ denote the associated Fuglede-Kadison determinant. In this paper, we characterize all unital bijective maps $\phi$ on the set of invertible positive elements in $\mathscr{R}$ which satisfy $$\Delta(\phi(A)+\phi(B)) = \Delta(A+B).$$
We show that any such map originates from a $\tau$-preserving Jordan $*$-automorphism of $\mathscr{R}$ (either $*$-automorphism or $*$-anti-automorphism in the more restrictive case of finite factors). In establishing the aforementioned result, we make crucial use of the solutions to the equation $\Delta(A + B) = \Delta(A) + \Delta(B)$ in the set of  invertible positive  operators in $\mathscr{R}$. To this end, we give a new proof of the inequality $$\Delta(A+B) \ge \Delta(A) + \Delta(B),$$ using a generalized version of the Hadamard determinant inequality and conclude that equality holds for invertible $B$ if and only if $A$ is a nonnegative scalar multiple of $B$.

\end{abstract}

\maketitle

\section{Introduction}
In 1897 Frobenius \cite{Frobenius} proved that if $\phi$ is a linear map on the matrix algebra $M_n(\mathbb{C})$ of $n \times n$ complex matrices preserving the determinant, then there are matrices $M,N\in M_n(\mathbb{C})$ such that $\det(MN)=1$ and $\phi$ can be written in one of the following forms:
\begin{itemize}
\item[a)] $\phi(A)=MAN, \quad A\in M_n(\mathbb{C})$;
\item[b)] $\phi(A)=MA^{t}N, \quad A\in M_n(\mathbb{C})$
\end{itemize}
where ${(\cdot)}^{t}$ denotes transposition of a matrix.

In the past decades this result of Frobenius has inspired many researchers to deal with different sorts of preserver problems involving various notions of determinant \cite{aupetit, dolinar, eaton, szokol, nagy, tan}. Among others, in \cite{szokol} Huang et al. completely described all maps on the positive definite cone $\mathbb{P}_n$ of $M_n(\mathbb{C})$ which satisfy the sole property $\det(\phi(A)+\phi(B))=\det\phi(I)\cdot \det(A+B)$ for all $A,B \in \mathbb{P}_n$. Note that when $\phi$ is a unital linear map on $\mathbb{P}_n$, the above property simply means that $\phi$ is $\det$ preserving. In this paper, we consider an identical operator algebraic counterpart of this problem in the setting of finite von Neumann algebras.

Our approach to the solution is based on a generalization of the Minkowski determinant inequality to the setting of von Neumann algebras. Note that the usual Minkowski determinant inequality for matrices $A, B \in \mathbb{P}_n$ asserts that
 $$\sqrt[n]{\mathrm{det}(A+B)} \ge \sqrt[n]{\mathrm{det}(A)} + \sqrt[n]{\mathrm{det}(B)},$$ with equality if and only if $A, B$ are positive scalar multiples of each other. In \cite[Corollary 4.3.3 (i)]{general-hadamard}, Arveson gives a variational proof of a version of the Minkowski determinant inequality in finite von Neumann algebras involving the Fuglede-Kadison determinant. Recently, this result has been subsumed by the study of the anti-norm property of a wide class of functionals by Bourin and Hiai \cite[Corollary 7.6]{bourin-hiai}. The equality conditions are harder to isolate from these proofs because of limiting arguments and are not explicitly documented. As that will play an important role in our results, we first need to establish when $\Delta(A+B) = \Delta(A) + \Delta(B)$ holds for positive operators $A, B$ in a finite von Neumann algebra. To this end, we give a new proof of the inequality using a generalized version of the Hadamard determinant inequality \cite{hadamard, fischer} in a bootstrapping argument. Consequently, we are able to solve the aforementioned preserver problem concerning Fuglede-Kadison determinants on finite von Neumann algebras.

The paper is organized as follows.
In \S{}~\ref{S:2} we fix the notation and briefly review some facts from the theory of determinants on von Neumann algebras.
The precise formulations of our corresponding results and their proofs are collected in \S{}~\ref{S:3} and \S{}~\ref{S:4}.

\section{Preliminaries} \label{S:2}

Throughout this paper, $\mathscr{R}$ denotes a finite von Neumann algebra acting on the complex (separable) Hilbert space $\mathscr{H}$ 
and containing the identity operator $I$. Let $\tau$ be a faithful tracial state on $\mathscr{R}$, by which we mean a linear functional 
$\tau:\mathcal{\mathscr{R}}\to \mathbb{C}$ such that for all $A,B\in \mathcal{\mathscr{R}}$, we have (i) $\tau(AB)=\tau(BA)$, (ii) 
$\tau(A^*A)\geq 0$ with equality if and only if $A=0$, (iii) $\tau(I)=1$. The set of invertible operators in $\mathscr{R}$ is denoted 
by $GL_1(\mathscr{R})$. We denote the cone of positive operators in $\mathscr{R}$ by $\mathscr{R}^{+}$ and use $GL_1(\mathscr{R})^{+}$ 
to denote the set of invertible operators in $\mathscr{R}^{+}$.

For $A \in GL_1(\mathscr{R})$, the Fuglede-Kadison determinant $\Delta$ associated with $\tau$ is defined as $$\Delta(A) = \exp(\tau(\log \sqrt{A^*A})).$$ 
The dependence of $\Delta$ on $\tau $ is suppressed in the notation and it is to be assumed that a choice of a faithful tracial state has already been made. Although this concept of determinant was developed in \cite{fuglede-kadison} in the context of type II$_{1}$ factors, it naturally extends to finite von Neumann algebras as above.

\begin{example}
\label{rmrk:fuglede-kadison}
The simplest examples of finite von Neumann algebras are given by $M_n(\mathbb{C})$, the full matrix algebra of $n \times n$ complex matrices. For $A \in M_n(\mathbb{C})$, the Fuglede-Kadison determinant $\Delta(A)$ is given by $\sqrt[n]{|\mathrm{det}(A)|}$ where $\mathrm{det}$ is the usual matrix determinant.
\end{example} 

\begin{example}
On $M_2(\mathbb{C})$ the unique faithful tracial state is given by $$\mathrm{tr}_2 : M_2(\mathbb{C}) \rightarrow \mathbb{C},\quad \mathrm{tr}_2(A) = \frac{a_{11} + a_{22}}{2}$$ where $a_{ij} \in \mathbb{C}$ $(1 \le i, j \le 2)$ denotes the $(i, j)^{\mathrm{th}}$ entry of the matrix $A$ in $M_2(\mathbb{C})$. Denote by $D_2(\mathbb{C})$ the $*$-subalgebra of diagonal matrices in $M_2(\mathbb{C})$. The von Neumann algebra $M_2(\mathscr{R}) \cong \mathscr{R} \otimes M_2(\mathbb{C})$ (acting on $\mathscr{H} \oplus \mathscr{H}$) is also finite and the faithful tracial state on $M_2(\mathscr{R})$ is given by $\tau_2 = \tau \otimes \mathrm{tr}_2$, that is, for an operator $A$ in $M_2(\mathscr{R})$, we have $$\tau_2(A) = \frac{\tau(A_{11}) + \tau(A_{22})}{2}.$$ 
We denote by $\Delta_2$ the Fuglede-Kadison determinant on $M_2(\mathscr{R})$ corresponding to $\tau \otimes \mathrm{tr}_2$. For operators $A_1, A_2$ in $\mathscr{R}$, we define 
$$\mathrm{diag}(A_1, A_2) := \begin{bmatrix}
A_1 & 0\\
0 & A_2
\end{bmatrix} \in D_2(\mathscr{R}) \cong \mathscr{R} \otimes D_2(\mathbb{C}) \subset M_2(\mathscr{R}).$$ It is straightforward to see that for invertible operators $A_1, A_2$ in $\mathscr{R}$, the operator $\mathrm{diag}(A_1, A_2)$ in $M_2(\mathscr{R})$ is invertible and $\Delta_2(\mathrm{diag}(A_1, A_2)) = \sqrt{\Delta(A_1) \cdot \Delta(A_2)}$.
\end{example}

\begin{example}
Let $X$ be a compact (Hausdorff) topological space with a probability Radon measure $\nu$. The space of essentially bounded complex-valued functions on $(X, \nu)$, denoted by $L^{\infty}(X,\nu)$, which acts via left multiplication on $L^2(X, \nu)$, forms an abelian von Neumann algebra. The involution operation is given by $f^*(x):=\overline{f(x)}$. A faithful tracial state on $L^{\infty}(X, \nu)$ is obtained by
\[
\tau_{\nu}(f)=\int_{G} f(x) \; d\nu(x), \textrm{ for }f\in L^{\infty}(X,\nu)
\]
and the corresponding Fuglede-Kadison determinant determinant is given by
\[
\Delta_{\nu}(f)=\exp\left(\int_{G} \log(\left|f(x)\right|) d\nu(x)\right).
\]
\end{example}

Group von Neumann algebras provide another important class of examples of finite von Neumann algebras, see e.g. \cite[\S 3.2]{sinclair-smith}.

One of the most remarkable properties of $\Delta$ is that it is a group homomorphism of $GL_1(\mathscr{R})$ into the multiplicative group of positive real numbers. However, there may be several extensions of $\Delta$ from $GL_1(\mathscr{R})$ to the whole of $\mathscr{R}$. From the proof of \cite[Lemma 6]{fuglede-kadison}, note that for a projection $E \ne I$ in $\mathscr{R}$, we have $\Delta '(E) = 0$ for any extension $\Delta '$ of $\Delta$. In this paper, we consider only the {\it analytic extension} which is defined as follows. For $A \in \mathscr{R}$, let $\sigma(|A|) \subset [0, \infty)$ denote the spectrum of $\sqrt{A^*A}$ and let $\mu$ be the probability measure supported on $\sigma(|A|)$ and induced by the tracial state $\tau$. Then we define $$\Delta(A):=\exp\left(\int_{\sigma(|A|)} \log \lambda \; d\mu(\lambda)\right)$$ with understanding that $\Delta(A) = 0$ whenever $\int_{\sigma(|A|)} \log \lambda \; d\mu(\lambda) = -\infty$. We abuse notation and denote this extension also by $\Delta$. 

Below we summarize some properties of $\Delta$ which we shall need in \S{}~\ref{S:3}.
\begin{itemize}
\item[(p1)] $\Delta(U) = 1$ for a unitary $U$ in $\mathscr{R}$;
\item[(p2)] $\Delta(AB) = \Delta(A)\cdot \Delta(B)$ for $A, B \in \mathscr{R}$;
\item[(p3)] $\Delta$ is norm continuous on $GL_1(\mathscr{R})$;
\item[(p4)] $\Delta(\lambda A) = |\lambda| \Delta(A)$ for $\lambda \in \mathbb{C}, A \in \mathscr{R}$;
\item[(p5)] $\lim_{\varepsilon \rightarrow 0{+}} \Delta(A + \varepsilon I) = \Delta(A)$ for a positive operator $A$ in $\mathscr{R}$.
\end{itemize}

\section{The Minkowski determinant inequality} \label{S:3}

In this section, we aim to establish the following version of the Minkowski determinant inequality.

\begin{thm}[generalized Minkowski determinant inequality]
\label{thm:minkowski-final}
{\sl For positive operators $A, B$ in $\mathscr{R}$, we have
\begin{equation}
\label{eqn:minkowski-final}\Delta(A+B) \ge \Delta(A) + \Delta(B).
\end{equation} 
Moreover, if $B$ is invertible, then equality holds in \eqref{eqn:minkowski-final} if and only if $A$ is a nonnegative scalar multiple of $B$.}
\end{thm}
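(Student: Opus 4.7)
My approach reduces the operator inequality to a scalar inequality through the spectral theorem and then closes it with Jensen's inequality. Using property (p5), it suffices to prove \eqref{eqn:minkowski-final} with $B$ invertible (approximating by $B + \varepsilon I$); the case with $B \in GL_{1}(\mathscr{R})^{+}$ and $A \in \mathscr{R}^{+}$ is the key case for both the inequality and the equality characterization. Setting $C := B^{-1/2}AB^{-1/2} \in \mathscr{R}^{+}$, I use the factorization
\[
A + B = B^{1/2}(I+C)B^{1/2},
\]
and multiplicativity (p2) together with $\Delta(B^{1/2})^{2} = \Delta(B)$ to obtain $\Delta(A+B) = \Delta(B)\,\Delta(I+C)$ and $\Delta(A) = \Delta(B)\,\Delta(C)$. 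Hence \eqref{eqn:minkowski-final} is equivalent to
\[
\Delta(I+C) \ge 1 + \Delta(C) \qquad \text{for every } C \in \mathscr{R}^{+}.
\]

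\textbf{Jensen step.} Apply the spectral theorem to $C$ and let $\mu$ be the probability measure on $\sigma(C) \subset [0,\infty)$ given by $\mu(E) = \tau(\chi_{E}(C))$, so that $\Delta(C) = \exp\!\int \log \lambda \, d\mu$ and $\Delta(I+C) = \exp\!\int \log(1+\lambda) \, d\mu$. If $\int \log \lambda \, d\mu = -\infty$, then $\Delta(C) = 0$ by the convention defining the analytic extension, and the inequality reduces to $\Delta(I+C) \ge 1$, which is immediate from $I+C \ge I$ and monotonicity of $\Delta$. Otherwise, substituting $s = \log \lambda$ and pushing $\mu$ forward to $\tilde\mu := (\log)_{*}\mu$, the desired estimate becomes
\[
\int_{\mathbb{R}} \phi(s) \, d\tilde\mu(s) \;\ge\; \phi\!\left(\int_{\mathbb{R}} s \, d\tilde\mu(s)\right), \qquad \phi(s) := \log(1 + e^{s}),
\]
which is exactly Jensen's inequality for the strictly convex function $\phi$ (one checks $\phi''(s) = e^{s}/(1+e^{s})^{2} > 0$); exponentiating yields $\Delta(I+C) \ge 1 + \Delta(C)$.

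\textbf{Equality case and obstacle.} Assume now $B \in GL_{1}(\mathscr{R})^{+}$. The direction $A = cB$ with $c \ge 0$ implies equality is immediate from (p4). Conversely, if $\Delta(C) = 0$, then equality forces $\Delta(I+C) = 1 = \Delta(I)$, and strict monotonicity of $\Delta$ on $GL_{1}(\mathscr{R})^{+}$ (coming from operator monotonicity of $\log$ together with faithfulness of $\tau$) forces $C = 0$, i.e.\ $A = 0 \cdot B$. If $\Delta(C) > 0$, then strict convexity of $\phi$ in Jensen's inequality forces $\tilde\mu$, hence $\mu$, to be a single Dirac mass $\delta_{\lambda_{0}}$ with $\lambda_{0} > 0$; faithfulness of $\tau$ then yields $\chi_{\{\lambda_{0}\}}(C) = I$, so $C = \lambda_{0}I$ and $A = \lambda_{0}B$. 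I expect the main technical obstacle to be the careful handling of the degenerate regime where $C \in \mathscr{R}^{+}$ is non-invertible: verifying strict monotonicity of $\Delta$ in order to dispatch the $\Delta(C) = 0$ case, and confirming that the Jensen reduction still applies when $C$ fails to be invertible yet $\Delta(C) > 0$ (for instance when $0 \in \sigma(C)$ but $\mu$ has no atom at $0$).
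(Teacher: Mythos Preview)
Your argument is correct and takes a genuinely different route from the paper. Both proofs share the reduction (via conjugation by $B^{-1/2}$ and multiplicativity) to the normalized inequality $\Delta(I+C)\ge 1+\Delta(C)$ for $C\in\mathscr{R}^{+}$, but diverge from there. The paper proves this normalized inequality by an elaborate bootstrapping: it first invokes the generalized Hadamard determinant inequality (Theorem~\ref{thm:hadamard}) applied to a $2\times 2$ block conditional expectation to obtain $\Delta(A_1)\Delta(A_2)\le\Delta(tA_1+(1-t)A_2)\Delta(tA_2+(1-t)A_1)$, then iterates this to an AM--GM type bound for $n$ operators, passes to rational weights to get $\Delta((p/q)I+A)\ge p/q+1$ when $\Delta(A)=1$, and finally approximates by rationals and uses a doubling/halving trick to pin down the equality case. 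You instead scalarize immediately via the spectral distribution $\mu$ of $C$ and apply Jensen's inequality to the strictly convex function $\phi(s)=\log(1+e^{s})$; the equality case then falls out of strict convexity (forcing $\mu$ to be a Dirac mass, hence $C=\lambda_0 I$ by faithfulness of $\tau$) together with the strict monotonicity argument you sketch for the degenerate case $\Delta(C)=0$. Your concerns about the non-invertible regime are not genuine obstacles: when $\Delta(C)>0$ the variable $s=\log\lambda$ is integrable (it is bounded above and $\int s\,d\tilde\mu=\log\Delta(C)>-\infty$), so Jensen and its equality case apply cleanly; when $\Delta(C)=0$, the inequality $\Delta(I+C)\ge 1$ and its equality case follow from operator monotonicity of $\log$ and faithfulness of $\tau$ exactly as you indicate. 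What the paper's approach buys is a proof that avoids spectral scalarization altogether and showcases the Hadamard inequality as the geometric engine behind Minkowski, tying into the paper's broader theme; what your approach buys is a considerably shorter and more self-contained argument that makes the equality condition transparent in a single stroke.
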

We work towards the proof of Theorem~\ref{thm:minkowski-final} using several lemmas. Before turning to their proof, let us explain the main ideas implemented in them.

A proof of the Minkowski determinant inequality (see \cite[p. 115]{marcus-minc}) for matrices is based on the 'traditional' Hadamard determinant inequality which states that for a positive definite matrix $A$ in $M_n(\mathbb{C})$, the determinant of $A$ is less than or equal to the product of its diagonal entries and equality holds if and only if $A$ is a diagonal matrix. For a given $A\in M_n(\mathbb{C})$, considering the positive semidefinite matrix $\sqrt{A^*A}$, one may derive from this inequality the geometrically intuitive fact that the volume of an $n$-parallelepiped with prescribed lengths of edges is maximized when the edges are mutually orthogonal. In this paper, we make use of an 'abstract' Hadamard-type determinant inequality in our proof to reflect the geometric origins of inequality (\ref{eqn:minkowski-final}).

Recall that if $\mathscr{S}$ is a von Neumann subalgebra of $\mathscr{R}$, then by a \emph{conditional expectation} we mean a unital (identity preserving) positive linear map $\Phi: \mathscr{R}\to \mathscr{S}$ which satisfies $\Phi(SAT)=S\Phi(A)T$ for all $A\in \mathscr{R}$ and $S,T\in \mathscr{S}$. Concerning $\tau$-preserving conditional expectations, in \cite[Theorem 4.1]{nayak-hadamard} the second author has proved the following generalization of the Hadamard determinant inequality:
\begin{thm}
\label{thm:hadamard}
{\sl
For a $\tau$-preserving conditional expectation $\Phi$ on $\mathscr{R}$ and an invertible positive operator $A$ in $\mathscr{R}$, we have that 
$$\Delta(\Phi(A^{-1})^{-1}) \le \Delta(A) \le \Delta(\Phi(A))$$ and equality holds in either of the above two inequalities (and hence in both inequalities) if and only if $\Phi(A) = A$.}
\end{thm}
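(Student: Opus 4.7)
The plan is to establish the stronger operator inequality $\log \Phi(A) \ge \Phi(\log A)$ for $A \in GL_1(\mathscr{R})^+$ and then propagate it through the faithful trace. The key device is the integral representation
\[
\log x \;=\; \int_0^{\infty}\!\left(\frac{1}{1+s}-\frac{1}{x+s}\right)ds, \qquad x>0,
\]
which, since $\sigma(A)$ is a compact subset of $(0,\infty)$, lifts to a norm-convergent operator identity for $\log A$. Applying $\Phi$ under the integral (legitimate by its norm boundedness) and subtracting from the analogous expansion for $\log \Phi(A)$, I arrive at the key identity
\[
\log \Phi(A) \,-\, \Phi(\log A) \;=\; \int_0^{\infty}\!\bigl(\Phi((A+sI)^{-1}) - (\Phi(A)+sI)^{-1}\bigr)\,ds.
\]

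The next step is to observe that each integrand is a positive operator. Since a conditional expectation is unital and completely positive, Choi's operator Jensen inequality for the operator convex function $x\mapsto x^{-1}$ on $(0,\infty)$, applied to $A+sI$, yields $\Phi((A+sI)^{-1}) \ge (\Phi(A)+sI)^{-1}$. Hence $\log\Phi(A) \ge \Phi(\log A)$. Applying $\tau$ (and using $\tau\circ\Phi=\tau$) gives $\tau(\log\Phi(A)) \ge \tau(\log A)$, which exponentiates to $\Delta(\Phi(A)) \ge \Delta(A)$ by the very definition of $\Delta$. The remaining inequality $\Delta(\Phi(A^{-1})^{-1}) \le \Delta(A)$ is obtained by substituting $A \mapsto A^{-1}$ and invoking the multiplicativity of $\Delta$ on invertible elements (property $(p2)$).

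For the equality case, I would argue as follows. Suppose $\Delta(\Phi(A)) = \Delta(A)$, equivalently $\tau(\log\Phi(A) - \Phi(\log A)) = 0$. The operator $\log\Phi(A) - \Phi(\log A)$ is positive, so by faithfulness of $\tau$ it vanishes; via the integral identity and the norm-continuity in $s$ of the positive integrand, this forces $\Phi((A+sI)^{-1}) = (\Phi(A)+sI)^{-1}$ for every $s\ge 0$. Writing $B_s := (A+sI)^{-1}$ and applying $\Phi$ to the resolvent identity $B_sB_t = (t-s)^{-1}(B_s - B_t)$, followed by the limit $t\to s$, I obtain $\Phi(B_s^2) = \Phi(B_s)^2$. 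This is the equality case of the Kadison--Schwarz inequality for the self-adjoint element $B_s$; a short computation expanding $\tau((B_s - \Phi(B_s))^*(B_s - \Phi(B_s)))$ and using $\tau\circ\Phi=\tau$ together with $\Phi(B_s^2)=\Phi(B_s)^2$ then forces $B_s = \Phi(B_s) \in \mathscr{S}$, whence $A+sI \in \mathscr{S}$ and $\Phi(A)=A$. The converse is immediate, and the equality case of the left-hand inequality follows by applying this conclusion to $A^{-1}$.

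The step I expect to be most delicate is the equality case: converting the scalar identity $\tau(\log\Phi(A)) = \tau(\log A)$ into the operator identity $\Phi(A) = A$. The payoff of routing everything through the integral representation is that the equality propagates \emph{pointwise} in $s$, producing a one-parameter family of resolvent equalities that collapse, via the resolvent identity, into the characteristic identity $\Phi(B_s^2) = \Phi(B_s)^2$ of the multiplicative domain -- from which membership in $\mathscr{S}$ is extracted using only the faithfulness of $\tau$ and $\tau\circ\Phi=\tau$.
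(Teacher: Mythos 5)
The paper itself offers no proof of Theorem~\ref{thm:hadamard}: it is imported verbatim from \cite{nayak-hadamard}, so there is no in-paper argument to compare yours against. Judged on its own, your proof is correct and self-contained. The chain --- integral representation of $\log$, the inequality $\Phi((A+sI)^{-1})\ge(\Phi(A)+sI)^{-1}$ applied under the integral sign to obtain the operator inequality $\log\Phi(A)\ge\Phi(\log A)$, then $\tau\circ\Phi=\tau$ --- delivers both determinant inequalities (the left one via $A\mapsto A^{-1}$ and multiplicativity of $\Delta$), and your treatment of the equality case, which is the genuinely delicate part, works: faithfulness of $\tau$ upgrades the scalar equality to $\log\Phi(A)=\Phi(\log A)$; continuity in $s$ of the positive integrand (after applying $\tau$) localizes this to $\Phi(B_s)=(\Phi(A)+sI)^{-1}$ for every $s\ge 0$; and the resolvent identity plus the limit $t\to s$ yields $\Phi(B_s^2)=\Phi(B_s)^2$, after which the Kadison--Schwarz equality computation (which indeed uses only the bimodule property and $\tau\circ\Phi=\tau$) gives $B_s=\Phi(B_s)$. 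In fact at $s=0$ you may conclude immediately, since $A^{-1}=\Phi(B_0)=\Phi(A)^{-1}$ forces $\Phi(A)=A$ without invoking spectral permanence. Two presentational points, neither a gap: (i) Choi's inequality for $x\mapsto x^{-1}$ requires $2$-positivity, whereas the paper's definition of conditional expectation postulates only positivity and the bimodule property; either cite Tomiyama's theorem that such maps are completely positive, or note the elementary alternative $0\le\Phi\bigl((B^{-1/2}-B^{1/2}C^{-1})^*(B^{-1/2}-B^{1/2}C^{-1})\bigr)=\Phi(B^{-1})-C^{-1}$ with $B=A+sI$ and $C=\Phi(B)\in\mathscr{S}$, which uses only those postulates; (ii) the passage from $\int_0^\infty D_s\,ds=0$ with $D_s\ge 0$ to $D_s\equiv 0$ should explicitly route through $\tau$ first (a continuous nonnegative scalar function with vanishing integral is identically zero), since a vanishing operator-valued integral of positive operators does not by itself annihilate the integrand.
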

We consider the map $\Phi_2 : M_2(\mathscr{R}) \rightarrow M_2(\mathscr{R})$ defined by $$\Phi_2(A) := \mathrm{diag}(A_{11}, A_{22}) \in D_2(\mathscr{R}) \subset M_2(\mathscr{R})$$ where $A_{ij} \in \mathscr{R} (1 \le i, j \le 2)$ is the $(i, j)^{\mathrm{th}}$ entry of $A$. Note that $\Phi_2$ is a ($\tau \otimes \mathrm{tr}_2$)-preserving normal conditional expectation from $M_2(\mathscr{R})$ onto the von Neumann subalgebra $D_2(\mathscr{R})$. In Lemma \ref{lem:minkowski-fischer}, we will use Theorem \ref{thm:hadamard} in the context of the von Neumann algebra $M_2(\mathscr{R})$ and the ($\tau \otimes \mathrm{tr}_2$)-preserving normal conditional expectation $\Phi_2$. More precisely, for positive operators $A_1, A_2$ in $\mathscr{R}$ we first prove in Lemma \ref{lem:minkowski-fischer} that 
\begin{equation}
\label{ineq:prelim-minkowski}
\Delta(A_1) \cdot \Delta(A_2) \le \Delta(tA_1 + (1-t)A_2) \cdot \Delta(tA_2 + (1-t)A_1), \quad t \in [0, 1].
\end{equation}
Choosing $t = 1/2$, we arrive at
\begin{align*}
&\Delta(A_1) \cdot \Delta(A_2) \le \Delta\left(\frac{A_1 + A_2}{2}\right)^2
\end{align*}
which readily implies that $2\sqrt{\Delta(A_1) \cdot \Delta(A_2)} \le \Delta(A_1 + A_2)$, which is weaker than the desired inequality. We then use a "tensor power trick" to proceed with a bootstrapping argument to prove the required inequality.

Now we are in a position to prove our first lemma.

\begin{lem}
\label{lem:minkowski-fischer}
{\sl For positive operators $A_1, A_2$ in $\mathscr{R}$ and $t \in [0,1]$, the following inequality holds: 
\begin{equation}
\label{ineq:minkowski-fischer}
\Delta(A_1) \cdot \Delta(A_2) \le \Delta(tA_1 + (1-t)A_2) \cdot \Delta(tA_2 + (1-t)A_1).
\end{equation}
Further if $A_1, A_2$ are invertible, then equality holds if and only if either $t \in \{0, 1\}$ or $A_1 = A_2$. }
\end{lem}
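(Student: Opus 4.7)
The plan is to apply Theorem~\ref{thm:hadamard} to $M_2(\mathscr{R})$ with the conditional expectation $\Phi_2$, after performing a judicious unitary conjugation of $\mathrm{diag}(A_1,A_2)$ that places the desired convex combinations on the diagonal.

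First I would consider the scalar (real orthogonal, hence unitary) matrix
\[
U_t = \begin{bmatrix} \sqrt{t} & -\sqrt{1-t}\\ \sqrt{1-t} & \sqrt{t}\end{bmatrix} \in M_2(\mathbb{C}) \subset M_2(\mathscr{R})
\]
and set $B := U_t\,\mathrm{diag}(A_1,A_2)\,U_t^*$. A direct $2\times 2$ computation shows
\[
B = \begin{bmatrix} tA_1+(1-t)A_2 & \sqrt{t(1-t)}(A_1-A_2)\\ \sqrt{t(1-t)}(A_1-A_2) & tA_2+(1-t)A_1\end{bmatrix},
\]
which is positive (as a unitary conjugate of a positive operator). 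Since $\Delta_2$ is multiplicative and $\Delta_2(U_t)=\Delta_2(U_t^*)=1$ by property (p1), we get $\Delta_2(B)=\Delta_2(\mathrm{diag}(A_1,A_2)) = \sqrt{\Delta(A_1)\Delta(A_2)}$.

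Next, assume temporarily that $A_1,A_2$ are invertible, so that $B$ is invertible in $M_2(\mathscr{R})$. Applying the Hadamard-type inequality of Theorem~\ref{thm:hadamard} to $\Phi_2$ and $B$ gives
\[
\sqrt{\Delta(A_1)\Delta(A_2)} = \Delta_2(B)\le \Delta_2(\Phi_2(B)) = \sqrt{\Delta(tA_1+(1-t)A_2)\,\Delta(tA_2+(1-t)A_1)},
\]
and squaring yields \eqref{ineq:minkowski-fischer} for invertible $A_1,A_2$. To obtain the general case (positive but not necessarily invertible), I would replace $A_i$ by $A_i+\varepsilon I$, apply the invertible case, and use property (p5) to let $\varepsilon\to 0^+$.

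For the equality statement with $A_1,A_2$ invertible, the equality clause in Theorem~\ref{thm:hadamard} gives $\Phi_2(B)=B$, which forces the off-diagonal blocks $\sqrt{t(1-t)}(A_1-A_2)$ to vanish. This happens precisely when $t\in\{0,1\}$ or $A_1=A_2$; conversely in either of these cases \eqref{ineq:minkowski-fischer} is trivially an equality. The main obstacle I anticipate is merely the algebraic verification of the formula for $B$ and making the approximation argument in (p5) rigorous; everything else is an immediate transcription once the unitary $U_t$ is in hand.
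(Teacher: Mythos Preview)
Your proposal is correct and follows essentially the same approach as the paper: conjugate $\mathrm{diag}(A_1,A_2)$ by a $2\times 2$ scalar unitary so that the convex combinations appear on the diagonal, then apply Theorem~\ref{thm:hadamard} with the conditional expectation $\Phi_2$ and invoke (p5) for the non-invertible case. The only cosmetic difference is that the paper uses the self-adjoint unitary $\begin{bmatrix}\sqrt{t}\,I & \sqrt{1-t}\,I\\ \sqrt{1-t}\,I & -\sqrt{t}\,I\end{bmatrix}$ in place of your rotation $U_t$, which produces the identical conjugated matrix $B$.
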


\begin{proof}
Consider the unitary operator $U$ in $M_2(\mathscr{R})$ given by
$$U :=  \begin{bmatrix}
\sqrt{t} I & \sqrt{1-t} I \\
\sqrt{1-t} I & -\sqrt{t} I
\end{bmatrix}.$$
Note that $$U^* \mathrm{diag}(A_1, A_2) U = \begin{bmatrix}
tA_1 + (1-t)A_2 & \sqrt{t(1-t)}(A_1 - A_2)\\
\sqrt{t(1-t)}(A_1 - A_2) & tA_2 + (1-t)A_1
\end{bmatrix}.$$
Clearly, $\Phi_2(U^*\mathrm{diag}(A_1, A_2)U) = \mathrm{diag}(tA_1 + (1-t)A_2, tA_2 + (1-t)A_1)$. Using Theorem~\ref{thm:hadamard} and property (p5) concerning $\Delta$, we get that 
\begin{align*}
\sqrt{\Delta(A_1)} \cdot \sqrt{\Delta(A_2)} = \Delta_{2}(\mathrm{diag}(A_1, A_2 )) = \Delta_2 (U^* \mathrm{diag}(A_1,A_2) U) \\
\le \Delta_2(\Phi_2(U^* \mathrm{diag}(A_1, A_2)U)) = \sqrt{\Delta(tA_1 + (1-t)A_2)} \cdot \sqrt{\Delta(tA_2 + (1-t)A_1)}.
\end{align*}
If $A_1, A_2$ are invertible, then $U^* \mathrm{diag}(A_1, A_2)U$ is also invertible and equality holds if and only if $\sqrt{t(1-t)}(A_1 - A_2) = 0$, that is, either $t \in \{0, 1\}$ or $A_1 = A_2$.
\end{proof}

\begin{lem}
\label{lem:minkowski-prep}
{\sl Let $n$ be a positive integer. For positive operators $A_1, A_2, \ldots, A_n$ in $\mathscr{R}$, the following inequality holds: 
\begin{equation}
\label{eqn:minkowski-prep}
\Delta\Big(\frac{A_1 + \cdots + A_n}{n} \Big) \ge (\Delta(A_1) \cdots \Delta(A_n))^{1/n}.
\end{equation} Further if $A_1, \ldots, A_n$ are invertible, then equality holds if and only if $A_1 = A_2 = \ldots = A_n$.}
\end{lem}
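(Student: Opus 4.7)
The plan is to establish the inequality first for $n$ a power of $2$ directly from Lemma~\ref{lem:minkowski-fischer}, and then pass to general $n$ by Cauchy's forward-backward induction --- the classical route used to derive the AM-GM inequality from its $n=2$ case.

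First, I would specialize Lemma~\ref{lem:minkowski-fischer} to $t=1/2$: this immediately yields $\Delta(A_1)\Delta(A_2) \le \Delta((A_1+A_2)/2)^{2}$, which is the desired inequality for $n=2$, and whose associated equality clause (for invertible $A_1, A_2$) asserts exactly that $A_1 = A_2$. Next, I would prove the case $n = 2^{k}$ by induction on $k$: setting $M_1 := (A_1+\cdots+A_{2^{k-1}})/2^{k-1}$ and $M_2 := (A_{2^{k-1}+1}+\cdots+A_{2^k})/2^{k-1}$, the $n=2$ case applied to $M_1, M_2$ combined with the inductive hypothesis applied to each half gives the claim. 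If all $A_i$ are invertible, then so are $M_1, M_2$ (since $M_1 \ge A_1/2^{k-1}$ is dominated below by an invertible positive operator, and similarly for $M_2$), so the equality clauses transfer: equality would force $M_1 = M_2$ together with $A_1 = \cdots = A_{2^{k-1}}$ and $A_{2^{k-1}+1} = \cdots = A_{2^k}$, which combine to give that all the $A_i$ coincide.

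For general $n$, I pick the smallest $N = 2^{k} \ge n$, set $M := (A_1+\cdots+A_n)/n$, and extend the list by $A_{n+1} = \cdots = A_N := M$. The arithmetic mean of these $N$ operators is again $M$, so the $n = 2^k$ case yields
\[
\Delta(M) \ge \bigl(\Delta(A_1)\cdots \Delta(A_n)\cdot \Delta(M)^{N-n}\bigr)^{1/N}.
\]
When $\Delta(M) > 0$, raising to the $N$-th power and canceling $\Delta(M)^{N-n}$ leaves $\Delta(M)^n \ge \Delta(A_1)\cdots\Delta(A_n)$, which is the desired inequality. The boundary case $\Delta(M) = 0$ is handled separately: since $A_i \le nM$, monotonicity of $\Delta$ on positive operators (which follows from operator monotonicity of $\log$ and positivity of $\tau$, together with the continuity property (p5)) forces $\Delta(A_i) \le n\Delta(M) = 0$ for each $i$, rendering the inequality trivial.

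For the equality clause when the $A_i$'s are invertible, note that $M \ge A_1/n$ is also invertible, so the augmented list $(A_1, \ldots, A_n, M, \ldots, M)$ consists entirely of invertible operators. Equality in the $n$-case forces equality in the $N$-case, and the $2^k$-equality clause (established in the dyadic step above) then forces all entries of the augmented list to coincide, giving $A_1 = \cdots = A_n = M$. The main subtlety I anticipate is correctly propagating the equality condition through both the dyadic induction and the forward-backward step and ensuring invertibility of the intermediate averages at every stage; both resolve cleanly once one observes that a positive operator dominating an invertible positive operator is itself invertible.
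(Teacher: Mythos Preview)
Your proposal is correct and follows essentially the same route as the paper: both use Lemma~\ref{lem:minkowski-fischer} at $t=1/2$ for the base case, induct to powers of $2$, and then run Cauchy's forward-backward step by padding with copies of the mean. The only cosmetic difference is that you handle the degenerate case $\Delta(M)=0$ explicitly via monotonicity of $\Delta$, whereas the paper dispatches the cancellation step with a terse reference to property~(p5).
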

\begin{proof}
First for $n = 2^k$ with $k \in \mathbb{N}$ we prove by induction the inequality \eqref{eqn:minkowski-prep} along with the equality condition and then employ a standard argument to establish it for all $n \in \mathbb{N}$. Choosing $t = 1/2$ in Lemma \ref{lem:minkowski-fischer}, we get for $A_1, A_2 \in \mathscr{R}^{+}$ that 
\begin{equation} \label{basecase}
\Delta \Big( \frac{A_1 + A_2}{2} \Big) \ge (\Delta(A_1) \cdot \Delta(A_2))^{1/2}.
\end{equation}
Further if $A_1, A_2$ are invertible, equality holds if and only if $A_1 = A_2.$ This proves the case when $n=2$.

Now assume that inequality \eqref{eqn:minkowski-prep} holds for $n = 2^{k-1}$ along with the equality condition. For $A_1, A_2, \ldots, A_{2^k}\in \mathscr{R}^+$, we define $$B_1 := \frac{A_1 + \cdots + A_{2^{k-1}}}{2^{k-1}}, \quad B_2 := \frac{A_{2^{k-1}+1} + \cdots + A_{2^k}}{2^{k-1}}.$$
From \eqref{basecase}, we infer that 
\begin{equation}
\label{eqn:base-case}
\Delta \Big(\frac{B_1 + B_2}{2} \Big) \ge (\Delta(B_1) \Delta(B_2))^{1/2}.
\end{equation}
Furthermore, the induction hypothesis furnishes
\begin{equation}
\label{eqn:induct-hyp}
\Delta(B_1) \ge (\Delta(A_1) \cdots \Delta(A_{2^{k-1}}) )^{1/2^{k-1}}, \quad\Delta(B_2) \ge (\Delta(A_{2^{k-1} + 1}) \cdots \Delta(A_{2^k}))^{1/2^{k-1}}.
\end{equation}
Combining \eqref{eqn:base-case} and \eqref{eqn:induct-hyp}, we have
\begin{equation}
\Delta \Big(\frac{A_1 + \cdots + A_{2^k}}{2^k} \Big) \ge (\Delta(A_1) \cdots \Delta(A_{2^k}))^{1/2^k}.
\end{equation}
If $A_1, \ldots, A_{2^k}$ are invertible, then so are $B_1, B_2$, and equality holds if and only if $B_1 = B_2$ $\quad A_1 = \cdots = A_{2^{k-1}}$ and $A_{2^{k-1}+1} = \cdots = A_{2^k}$ or, in other words, if and only if $A_1 = \cdots  = A_{2^k}$. Thus by induction, for $n$ a power of $2$, we have established inequality \eqref{eqn:minkowski-prep} along with the equality condition.

Next we consider an arbitrary positive integer $m$. Let $k$ be a positive integer such that $2^{k-1} \le m < 2^k$. For positive operators $A_1, \ldots, A_m,$ we define a positive operator $B$ by $B := (A_1 + \cdots + A_m)/m$.
It follows that
\begin{align*}
\Delta(B) = \Delta \Big(\frac{A_1 + \cdots + A_m + (2^k - m)B}{2^k} \Big) \ge (\Delta(A_1) \cdots \Delta(A_m))^{1/2^k} (\Delta(B))^{1 - m/2^k}
\end{align*}
and using property (p5), we conclude that
\begin{align*}
\Delta(B) ^{m/2^k} \ge (\Delta(A_1) \cdots \Delta(A_m))^{1/2^k}
~ \Longrightarrow ~ \Delta(B) \ge (\Delta(A_1) \cdots \Delta(A_m))^{1/m}.
\end{align*}
If $A_1, \ldots, A_m$ are invertible, then so is $B$ and equality holds if and only if $A_1 = A_2 = \ldots = A_m = B$.
\end{proof}

\begin{thm}
\label{thm:minkowski}
{\sl For a positive operator $A$ in $\mathscr{R}$ and $t \ge 0$, the following inequality holds:
\begin{equation}
\label{eqn:minkowski}
\Delta(tI+A) \ge t + \Delta(A)
\end{equation} with equality if and only if either $t = 0$ or $A$ is a nonnegative scalar multiple of $I$.}
\end{thm}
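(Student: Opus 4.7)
The plan is to apply Lemma~\ref{lem:minkowski-prep} with a carefully weighted collection of operators and then optimize the resulting bound. Set $d := \Delta(A)$, and assume first that $t > 0$ and $d > 0$. For any rational $\alpha = k/n \in (0,1)$, apply Lemma~\ref{lem:minkowski-prep} to the $n$-tuple consisting of $k$ copies of the positive operator $A/\alpha$ together with $n-k$ copies of $tI/(1-\alpha)$. Their arithmetic mean is exactly $A + tI$, and property (p4) gives
\begin{equation}\label{eqn:plan-bound}
\Delta(A + tI) \ge \Bigl(\frac{d}{\alpha}\Bigr)^{\alpha} \Bigl(\frac{t}{1-\alpha}\Bigr)^{1-\alpha}.
\end{equation}
A direct calculation shows that the right-hand side $F(\alpha)$ attains its unique maximum on $(0,1)$ at $\alpha^{\ast} = d/(d+t)$ with $F(\alpha^{\ast}) = d+t$. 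Since the rationals are dense in $(0,1)$ and $F$ is continuous, the supremum of \eqref{eqn:plan-bound} over rational $\alpha$ is $d+t$, giving $\Delta(A + tI) \ge d + t$. The boundary case $d=0$ follows from the monotonicity of $\Delta$ on positive operators (yielding $\Delta(A+tI) \ge \Delta(tI) = t$, strict whenever $A \ne 0$), and the case $t=0$ is trivial.

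For the equality case with $t > 0$, I would first observe that equality propagates downward in $t$: writing $A + tI = (A + t'I) + (t - t')I$ for $t' \in (0, t)$ and applying the main inequality twice gives
\[
d + t = \Delta(A+tI) \ge (t-t') + \Delta(A + t'I) \ge (t-t') + (t' + d) = d + t,
\]
forcing $\Delta(A + t'I) = d + t'$ for every $t' \in [0, t]$. When $d > 0$, one may choose $t' \in (0, t]$ with $t'/d$ a positive rational, so that $\alpha^{\ast} = d/(d+t')$ is itself rational. At such a $t'$ the bound \eqref{eqn:plan-bound} (with $t'$ in place of $t$) is saturated; provided $A$ is invertible, all operators in the weighted tuple are then invertible, and the equality clause of Lemma~\ref{lem:minkowski-prep} forces $A/\alpha^{\ast} = t'I/(1-\alpha^{\ast})$, which simplifies to $A = dI$.

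To handle the case in which $A$ is positive but not invertible, I would pass to $B := A + (t/2)I$, which is positive and invertible. The downward-chaining gives $\Delta(B) = d + t/2$ and $\Delta(B + (t/2)I) = \Delta(A + tI) = d + t = \Delta(B) + t/2$, so the invertible equality case just established applies to $B$ at parameter $t/2 > 0$ and yields $B = cI$ for some $c \ge t/2$; hence $A = (c - t/2)I$ is a nonnegative scalar multiple of $I$. The main obstacle is precisely this equality clause: the rational $\alpha = k/n$ used in Lemma~\ref{lem:minkowski-prep} is almost never exactly the optimal $\alpha^{\ast}$, so the equality condition of Lemma~\ref{lem:minkowski-prep} cannot be invoked at the given value of $t$ directly. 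The downward-chaining trick circumvents this by transferring equality to a nearby $t'$ for which $\alpha^{\ast}$ is rational, and the shift to $B$ circumvents any failure of invertibility of $A$.
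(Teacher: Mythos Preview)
Your proof is correct and follows essentially the same route as the paper: both apply Lemma~\ref{lem:minkowski-prep} to a weighted mixture of (scaled copies of) $A$ and $I$, and both handle the equality case by chaining the assumed equality to a nearby parameter at which the relevant weight is rational, so that the equality clause of Lemma~\ref{lem:minkowski-prep} can be invoked. Your execution is modestly streamlined---your sandwich $\Delta(A+tI)\ge (t-t')+\Delta(A+t'I)\ge t+d$ gives downward propagation directly (the paper first passes through $2s$), and your single shift to $B=A+(t/2)I$ replaces the paper's telescoping limit in the non-invertible case---while your treatment of $d=0$ relies on monotonicity of $\Delta$, which is standard but not among the listed properties (p1)--(p5).
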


\begin{proof}
Let $A$ be an invertible positive operator such that $\Delta(A) = 1$. For $p, q \in \mathbb{N}$, an application of Lemma~\ref{lem:minkowski-prep} gives us that $$\Delta\left(\frac{pI + qA}{p+q}\right) \ge \sqrt[p+q]{\Delta(I)^p \Delta(A)^q} = 1.$$ Thus $\Delta((p/q)I + A) \ge p/q + 1$ with equality if and only if $A = I$. Approximating with strictly positive rational numbers, we have by property (p5) for $\Delta$ that
\begin{equation}
\label{eqn:ineq}
\Delta(tI + A) \ge t + 1, \quad \textrm{ for } t \ge 0.
\end{equation} 
Note that for $A\in GL_1(\mathscr{R})^{+}$ the operator $B := (1/\Delta(A))A$ is an invertible positive operator satisfying $\Delta(B) = 1$. As $\Delta(tI + B) \ge t + 1$, for $t \ge 0$ substituting $s = t \Delta(A)$, we get the desired inequality
\begin{equation}
\label{ineq:inside}
\Delta(sI+A) \ge s + \Delta(A), \quad \textrm{ for } s \ge 0.
\end{equation} 

Next we derive conditions for the case of equality in \eqref{ineq:inside}. Note that for a particular value of $s$ under consideration, if $s/\Delta(A)$ is rational, then equality holds in \eqref{ineq:inside} if and only if $B = I \Leftrightarrow A = \Delta(A) I$. For $s > 0$, if $\Delta(sI + A) = s + \Delta(A)$, using \eqref{ineq:inside} repeatedly along with the multiplicativity of $\Delta$, we get that 
\begin{align*}
s^2 + \Delta(A)(2s + \Delta(A)) &= (s + \Delta(A))^2 = \Delta (sI + A)^2 = \Delta(s^2I + A(2sI + A)) \\
&\ge s^2 + \Delta(A) \Delta(2sI + A) \ge s^2 + \Delta(A)(2s + \Delta(A)).
\end{align*}
Thus we conclude that $\Delta(2sI + A) = 2s + \Delta(A)$. For $r \in ]0, s[$, using \eqref{ineq:inside} we deduce that
\begin{align*}
2s + \Delta(A) = \Delta(2sI + A) &= \Delta((s+r)I + (s-r)I + A)\\
& \ge (s+r) + \Delta((s-r)I + A) \\
& \ge (s+r) + (s-r) + \Delta(A) = 2s + \Delta(A).
\end{align*}
As $(s+r) + \Delta((s-r)I + A) = 2s + \Delta(A)$, we have that $\Delta((s-r)I + A) = s-r + \Delta(A)$ for all $r \in ]0, s[$. We may choose $r$ such that $(s-r)/\Delta(A)$ is rational and thus conclude that $A$ is a scalar multiple of the identity. Hence equality holds in \eqref{ineq:inside} if and only if either $s = 0$ or $A$ is a scalar multiple of the identity.

Next we consider the case when $A$ is not necessarily invertible. For some $t > 0$ and any $s \in ]0, t]$, define $A_{s} := sI + A$. As documented in property (p5) for $\Delta$, note that $\lim_{\varepsilon \rightarrow 0+} \Delta(A_{\varepsilon}) = \Delta(A)$. We have $$\Delta(A_t) = \Delta\left(\frac{t}{2}I + A_{t/2}\right) \ge \frac{t}{2} + \Delta(A_{t/2}) \ge \sum_{i=1}^k \frac{t}{2^i} + \Delta(A_{t / 2^k}).$$
Taking the limit $k \rightarrow \infty$, we conclude that $$\Delta(tI + A) = \Delta(A_t) \ge \sum_{i=1}^{\infty} \frac{t}{2^i} + \Delta(A) = t + \Delta(A). $$ If $A$ is a scalar multiple of the identity, equality trivially holds. If $\Delta(tI + A) = t + \Delta(A)$, we must have $\Delta(A_t) = t/2 + \Delta(A_{t/2})$ and thus $A_{t/2}$ is a scalar multiple of $I$ implying that $A$ is a scalar multiple of $I$.
\end{proof}

\begin{proof}[Proof of Theorem~\ref{thm:minkowski-final}]
If $B$ is invertible, then by \eqref{eqn:minkowski} we infer that $$\Delta(I + B^{-\frac{1}{2}}AB^{-\frac{1}{2}}) \ge 1 + \Delta(B^{-\frac{1}{2}}AB^{-\frac{1}{2}})$$ with equality if and only if $B^{-\frac{1}{2}}AB^{-\frac{1}{2}} = \lambda I$ with some $\lambda \ge 0$. Using the multiplicative property of the determinant $\Delta$, we conclude that $\Delta(A + B) \ge \Delta(A) + \Delta(B)$ with equality if and only if $A = \lambda B$ with some $\lambda \ge 0$.

If $B$ is not invertible, we consider the invertible positive operator $B_{\varepsilon} := B + \varepsilon I$ $(\varepsilon > 0)$. Then we have $\Delta(A + B_{\varepsilon}) \ge \Delta(A) + \Delta(B_{\varepsilon})$ and taking the limit $\varepsilon \rightarrow 0+$, we see that $\Delta(A+B) \ge \Delta(A) + \Delta(B)$, as required.
\end{proof}

\begin{remark}
If $A$ or $B$ is invertible, then the condition of equality has a straightforward form demanding a scaling relationship between the operators unless one of them is $0$. But if neither $A$ nor $B$ is invertible, then the conditions under which equality holds are less easy to characterize. Assume that $E, F$ are orthogonal projections in $\mathscr{R}$ such that $E + F < I$. We then have $\Delta( E + F) = 0 = \Delta (E) + \Delta(F)$ and thus we cannot expect the operators in question to have any specific 'correlation', a term which we do not define but whose spirit is captured in the above example.
\end{remark}

\section{A class of determinant preserving maps} \label{S:4}

In this section, we present our result concerning certain determinant preserving bijective maps on $GL_1(\mathscr{R})^{+}$.
Before we do so let us recall the concept of Jordan $*$-isomorphisms.

\begin{definition}
A linear map $J:\mathscr{R}\to \mathscr{R}$ is called a
\begin{itemize}
\item[(i)] Jordan homomorphism if $J(A^2)=J(A)^2$, for all $A\in \mathscr{R}$;
\item[(ii)] Jordan $*$-homomorphism if $J(A^*) = J(A)^*$ for all $A \in \mathscr{R}$ and $J$ is a Jordan homomorphism;
\item[(ii)] Jordan $*$-isomorphism if $J$ is a bijective Jordan $*$-homomorphism.
\end{itemize}
\end{definition}

Whenever we use the terms $*$-homomorphism, $*$-isomorphisms and $*$-automorphism, it is implicitly understood to refer to the $C^*$-algebraic structure of $\mathscr{R}$. A celebrated result of Kadison \cite[Corollary 5]{kadison-schwarz} states that a (linear) order automorphism of a $C^*$-algebra is necessarily implemented by a Jordan $*$-automorphism. 
Roughly speaking, this means that in a $C^*$-algebra the order and the Jordan structures are intimately connected and in fact, determine each other.
The mentioned result of Kadison was crucially used in \cite[Lemma 8]{kloop} where the structure of additive bijective maps was determined on the cone $GL_1(\mathscr{R})^{+}$. We apply this in the proof of the main result of the section which is as follows.

\begin{thm}\label{T:preserver}
\textsl{
Let $\phi: GL_1(\mathscr{R})^{+}\to GL_1(\mathscr{R})^{+}$ be a bijection. Then
\[
\Delta(\phi(A)+\phi(B))=\Delta(\phi(I))\cdot \Delta(A+B)
\]
holds for all $A,B$ in $GL_1(\mathscr{R})^{+}$ if and only if there is a $\tau$-preserving Jordan $*$-isomorphism $J:\mathscr{R}\to \mathscr{R}$ and a positive invertible element $T\in GL_1(\mathscr{R})^{+}$ such that
\[
\phi(A)=TJ(A)T, \quad A\in GL_1(\mathscr{R})^{+}.
\]
}
\end{thm}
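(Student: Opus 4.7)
For the ``if'' direction: writing $\phi(A) + \phi(B) = T J(A+B) T$ by linearity of $J$, and using that a $\tau$-preserving Jordan $*$-isomorphism $J$ satisfies $\Delta \circ J = \Delta$ on $GL_1(\mathscr{R})^+$ (since $J$ intertwines with the continuous functional calculus on positive elements, so $J(\log A) = \log J(A)$, and $\tau \circ J = \tau$), the functional equation reduces to the identity $\Delta(TJ(A+B)T) = \Delta(T)^{2} \Delta(A+B) = \Delta(\phi(I)) \Delta(A+B)$.

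For the forward direction, the plan is to first \emph{normalize and extract homogeneity}, then \emph{distill a bilinear trace identity} from which \emph{additivity} will follow by faithfulness of $\tau$, and finally \emph{invoke the structure theorem} for additive bijections. Set $T := \phi(I)^{1/2}$ and $\psi(A) := T^{-1} \phi(A) T^{-1}$: this is a bijection of $GL_1(\mathscr{R})^+$ with $\psi(I) = I$ and the cleaner equation $\Delta(\psi(A)+\psi(B)) = \Delta(A+B)$. Taking $B = A$ and using property (p4) yields $\Delta(\psi(A)) = \Delta(A)$; taking $B = \lambda A$ for $\lambda > 0$ makes both sides of the functional equation equal $(1+\lambda)\Delta(A)$, saturating the inequality of Theorem~\ref{thm:minkowski-final} for the invertible positives $\psi(A), \psi(\lambda A)$, and thus forcing $\psi(\lambda A) \in \mathbb{R}_+\cdot \psi(A)$; matching determinants gives positive homogeneity $\psi(\lambda A) = \lambda \psi(A)$.

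Homogeneity upgrades the functional equation to $\Delta(\psi(A) + \mu \psi(B)) = \Delta(A + \mu B)$ for all $\mu > 0$. Factoring $\psi(B)^{1/2}$ out of the left side and $B^{1/2}$ out of the right (noting $\Delta(\psi(B)) = \Delta(B)$), this rewrites as
\begin{equation*}
\Delta\bigl(\psi(B)^{-1/2} \psi(A) \psi(B)^{-1/2} + \mu I\bigr) = \Delta\bigl(B^{-1/2} A B^{-1/2} + \mu I\bigr), \quad \mu > 0.
\end{equation*}
Since $\mu \mapsto \log\Delta(X + \mu I) = \int \log(\mu+t)\, d\mu_X(t)$ determines the $\tau$-spectral distribution $\mu_X$ of a positive $X$ (its derivative in $\mu$ is the Cauchy transform, which is injective on measures on $[0,\infty)$), the two positive operators above are $\tau$-equidistributed. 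Comparing first-order traces and using cyclicity yields the bilinear identity
\begin{equation*}
\tau\bigl(\psi(A)\, \psi(B)^{-1}\bigr) = \tau(A B^{-1}), \qquad A, B \in GL_1(\mathscr{R})^+.
\end{equation*}
Fixing $B$ and using linearity of the right-hand side in $A$, this gives $\tau\bigl((\psi(A_1+A_2) - \psi(A_1) - \psi(A_2))\, \psi(B)^{-1}\bigr) = 0$ for every $B$. As $B$ varies, $\psi(B)^{-1}$ sweeps all of $GL_1(\mathscr{R})^+$ (by surjectivity of $\psi$ together with inversion), whose $\mathbb{C}$-linear span is $\mathscr{R}$; faithfulness of $\tau$ applied to the self-adjoint element inside then yields $\psi(A_1+A_2) = \psi(A_1) + \psi(A_2)$.

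Once $\psi$ is additive on $GL_1(\mathscr{R})^+$, the structure theorem \cite[Lemma 8]{kloop}, resting on Kadison's order-automorphism theorem, extends it uniquely to a Jordan $*$-isomorphism $J$ of $\mathscr{R}$; the $\tau$-preservation $\tau \circ J = \tau$ is immediate from the special case $B = I$ of the bilinear identity extended by linearity. Unwinding the normalization gives $\phi(A) = T J(A) T$. The step I anticipate as the main obstacle is precisely the passage from the one-parameter determinant equality to the bilinear trace identity $\tau(\psi(A)\psi(B)^{-1}) = \tau(A B^{-1})$, together with the subsequent realization that faithfulness of $\tau$ on the cone $GL_1(\mathscr{R})^+$ is exactly the mechanism that upgrades this identity to honest additivity; after this, the rest of the argument rests on existing machinery.
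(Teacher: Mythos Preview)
Your proof is correct and follows the same overall architecture as the paper's: normalize to a unital $\psi$, deduce $\Delta$-preservation, extract positive homogeneity from the equality case of Theorem~\ref{thm:minkowski-final}, derive the bilinear identity $\tau(\psi(A)\psi(B)^{-1}) = \tau(AB^{-1})$, upgrade to additivity via faithfulness of $\tau$, and invoke \cite[Lemma~8]{kloop}.

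The one place you differ from the paper is in how the bilinear trace identity is obtained. The paper differentiates $t \mapsto \Delta(tA+(1-t)B)$ at $t=0^{+}$ directly, using \cite[Lemma~2]{fuglede-kadison} on derivatives of traces through the Riesz--Dunford functional calculus (packaged as a separate corollary yielding $g'(0^{+}) = \Delta(B)(\tau(B^{-1}A)-1)$). You instead factor out $B^{1/2}$ and $\psi(B)^{1/2}$ to obtain $\Delta(X+\mu I) = \Delta(Y+\mu I)$ for all $\mu>0$, then argue via the Stieltjes transform that the full $\tau$-spectral distributions of $X = B^{-1/2}AB^{-1/2}$ and $Y = \psi(B)^{-1/2}\psi(A)\psi(B)^{-1/2}$ coincide, and finally read off the first moment. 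Your route extracts strictly more information (equidistribution rather than just equal first moments) at essentially no extra cost and avoids the auxiliary differentiation lemma; the paper's route is more direct to the single moment actually needed but relies on that external result.
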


The proof uses the main ideas in \cite{szokol}, however, we must adapt them to the much more general setting of finite von Neumann algebras. 
Before turning to the proof of Theorem \ref{T:preserver}, we paraphrase an auxiliary lemma from \cite{fuglede-kadison} which makes use of the Riesz-Dunford holomorphic functional calculus for Banach algebras \cite{dunford} to derive a pertinent corollary.

\begin{lem}[{\cite[Lemma 2]{fuglede-kadison}}]
\label{lem:tech-preserver}
\textsl{
Let $\mathscr{B}$ be a complex Banach algebra with a norm-continuous tracial linear functional $\mathfrak{T}$. Let $f(\lambda)$ be a holomorphic function on a domain $\Lambda \subset \mathbb{C}$ bounded by a curve $\Gamma$ and let $\gamma : [0,1] \rightarrow \mathscr{R}$ be a differentiable family of operators in $\mathscr{R}$, such that the spectrum of each operator $\gamma(t)$ lies in $\Lambda$. Then $f(\gamma(t))$ is differentiable with respect to $t$ and $$ \mathfrak{T}( (f \circ \gamma)'(t) )) = \mathfrak{T}(f'(\gamma(t)) \cdot  \gamma '(t)).$$
}
\end{lem}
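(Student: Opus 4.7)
The plan is to exploit the Riesz--Dunford holomorphic functional calculus representation
$$f(\gamma(t)) = \frac{1}{2\pi i} \oint_\Gamma f(\lambda)\bigl(\lambda I - \gamma(t)\bigr)^{-1}\, d\lambda,$$
which is valid because the spectrum of $\gamma(t)$ lies inside $\Lambda$ and $\Gamma$ separates it from infinity. The strategy is to differentiate this expression under the contour integral in $t$, apply $\mathfrak{T}$, and then repackage the result using the tracial property together with a second application of the holomorphic functional calculus.

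First I would establish that $t \mapsto f(\gamma(t))$ is differentiable in the Banach algebra norm, using the standard resolvent derivative identity
$$\frac{d}{dt}\bigl(\lambda I - \gamma(t)\bigr)^{-1} = \bigl(\lambda I - \gamma(t)\bigr)^{-1}\,\gamma'(t)\,\bigl(\lambda I - \gamma(t)\bigr)^{-1},$$
which follows from differentiating $(\lambda I - \gamma(t))(\lambda I - \gamma(t))^{-1} = I$. Because $\Gamma$ is compact, the resolvent family is uniformly norm-bounded for $(\lambda,t)$ in compact sets, so difference quotients converge uniformly and differentiation under the integral is legitimate, giving
$$(f \circ \gamma)'(t) = \frac{1}{2\pi i} \oint_\Gamma f(\lambda)\bigl(\lambda I - \gamma(t)\bigr)^{-1}\gamma'(t)\bigl(\lambda I - \gamma(t)\bigr)^{-1}\, d\lambda.$$
Applying the norm-continuous functional $\mathfrak{T}$, commuting it through the contour integral, and invoking cyclicity $\mathfrak{T}(XYZ) = \mathfrak{T}(YZX)$ on each integrand yields
$$\mathfrak{T}\bigl((f \circ \gamma)'(t)\bigr) = \mathfrak{T}\!\left(\gamma'(t) \cdot \frac{1}{2\pi i}\oint_\Gamma f(\lambda)\bigl(\lambda I - \gamma(t)\bigr)^{-2}\, d\lambda\right).$$

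To finish, I would identify the remaining contour integral with $f'(\gamma(t))$. This follows by applying Cauchy's theorem to the operator-valued analytic function $\lambda \mapsto f(\lambda)\bigl(\lambda I - \gamma(t)\bigr)^{-1}$ on $\Lambda$: its $\lambda$-derivative equals $f'(\lambda)\bigl(\lambda I - \gamma(t)\bigr)^{-1} - f(\lambda)\bigl(\lambda I - \gamma(t)\bigr)^{-2}$, and since the contour integral of a derivative around the closed curve $\Gamma$ vanishes,
$$\frac{1}{2\pi i}\oint_\Gamma f(\lambda)\bigl(\lambda I - \gamma(t)\bigr)^{-2}\, d\lambda = \frac{1}{2\pi i}\oint_\Gamma f'(\lambda)\bigl(\lambda I - \gamma(t)\bigr)^{-1}\, d\lambda = f'(\gamma(t))$$
by the functional calculus applied to the holomorphic function $f'$. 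A final use of cyclicity of $\mathfrak{T}$ then gives $\mathfrak{T}\bigl((f \circ \gamma)'(t)\bigr) = \mathfrak{T}\bigl(f'(\gamma(t)) \cdot \gamma'(t)\bigr)$, as desired.

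The main obstacle is essentially bookkeeping: cleanly justifying both the differentiation under the contour integral and the interchange of $\mathfrak{T}$ with that integral. Both rest on compactness of $\Gamma$, uniform norm-control of the resolvent $(\lambda I - \gamma(t))^{-1}$ on $\Gamma \times [0,1]$, and norm-continuity of $\mathfrak{T}$; none is deep, but each must be stated so that the cyclic rearrangement inside the integrand is unambiguously justified.
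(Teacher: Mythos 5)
Your argument is correct, and it is essentially the classical proof: the paper itself does not prove this lemma but imports it verbatim as Lemma 2 of Fuglede--Kadison, whose proof proceeds exactly as you do --- differentiate the Riesz--Dunford integral via the resolvent identity, pull the tracial functional through the contour integral, use cyclicity to collapse $R(\lambda)\gamma'(t)R(\lambda)$ to $\gamma'(t)R(\lambda)^{2}$, and identify $\frac{1}{2\pi i}\oint_\Gamma f(\lambda)R(\lambda)^{2}\,d\lambda$ with $f'(\gamma(t))$ by integration by parts on the closed contour. No gaps; the only points deserving explicit care are the ones you already flagged (uniform resolvent bounds on $\Gamma\times[0,1]$ to justify differentiating and interchanging $\mathfrak{T}$ with the integral).
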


\begin{cor}
\label{cor:tech-preserver}
\textsl{
For invertible positive operators $A, B$ in $\mathscr{R}$, the function $g : [0, 1] \rightarrow \mathbb{R}$ defined by $g(t) = \Delta(tA + (1-t)B)$ is differentiable at $0+$ and $g'(0+) = \Delta(B) (\tau(B^{-1}A) - 1).$}
\end{cor}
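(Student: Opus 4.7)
The plan is to apply Lemma~\ref{lem:tech-preserver} directly, taking the tracial functional to be $\tau$, the holomorphic function to be the principal branch of $\log$ on the slit plane $\Lambda := \mathbb{C} \setminus (-\infty, 0]$, and the differentiable family to be the affine path $\gamma : [0,1] \to \mathscr{R}$ defined by $\gamma(t) := tA + (1-t)B = B + t(A-B)$. Since $A$ and $B$ are both invertible positive, every convex combination $\gamma(t)$ lies in $GL_1(\mathscr{R})^{+}$, and in particular its spectrum is a compact subset of $(0, \infty) \subset \Lambda$; the path is visibly differentiable (in fact affine) with $\gamma'(t) = A - B$. Since the Riesz--Dunford holomorphic functional calculus agrees with the continuous (Borel) functional calculus on self-adjoint operators with spectrum inside $\Lambda$, the operator $\log \gamma(t)$ coincides with the one appearing in the defining formula $\Delta(\gamma(t)) = \exp \tau(\log \gamma(t))$.

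Armed with this, I would first invoke Lemma~\ref{lem:tech-preserver} to conclude that the real-valued function $h(t) := \tau(\log \gamma(t))$ is differentiable on $[0,1]$, with
\[
h'(t) \;=\; \tau\bigl((\log)'(\gamma(t)) \cdot \gamma'(t)\bigr) \;=\; \tau\bigl(\gamma(t)^{-1}(A-B)\bigr).
\]
Then, since $g(t) = \exp h(t)$, the chain rule gives that $g$ is differentiable on $[0,1]$ with
\[
g'(t) \;=\; g(t)\, h'(t) \;=\; \Delta(\gamma(t))\, \tau\bigl(\gamma(t)^{-1}(A-B)\bigr).
\]
Evaluating at $t = 0$, using $\gamma(0) = B$ and the trace property $\tau(I) = 1$, one obtains
\[
g'(0+) \;=\; \Delta(B)\,\tau\bigl(B^{-1}(A - B)\bigr) \;=\; \Delta(B)\bigl(\tau(B^{-1}A) - 1\bigr),
\]
as required.

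The only mild technical point is to verify the hypotheses of Lemma~\ref{lem:tech-preserver} cleanly: that $\tau$ is a norm-continuous tracial linear functional (immediate since $\tau$ is a state), that $\gamma$ is differentiable in the norm topology (obvious, as it is affine), and that the spectra of $\gamma(t)$ are uniformly contained in $\Lambda$ (which follows from the positivity and invertibility of $B$, since $\gamma(t) \ge (1-t)B \ge \min_t(1-t)\cdot \|B^{-1}\|^{-1} I$ on a neighbourhood of $0$, and analogously on $[0,1]$ using both $A$ and $B$). I do not anticipate a genuine obstacle; the entire content of the corollary is the identification of the derivative furnished by the abstract lemma with the natural expression $\Delta(B)(\tau(B^{-1}A) - 1)$.
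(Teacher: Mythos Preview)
Your proposal is correct and essentially identical to the paper's proof: the same affine path $\gamma(t)=tA+(1-t)B$, the same holomorphic function $\log$, the same invocation of Lemma~\ref{lem:tech-preserver} to obtain $(\tau\circ\log\circ\gamma)'(0)=\tau(B^{-1}(A-B))$, followed by the chain rule applied to $g=\exp(\tau\circ\log\circ\gamma)$. The only cosmetic difference is that the paper selects a \emph{bounded} domain $\Lambda$ (since Lemma~\ref{lem:tech-preserver} as stated asks for $\Lambda$ to be bounded by a curve $\Gamma$) rather than the full slit plane; this is harmless because the spectra of all $\gamma(t)$ lie in a fixed compact subinterval of $(0,\infty)$, so one may shrink your $\Lambda$ accordingly without change.
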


\begin{proof}
Let $\gamma : [0,1] \rightarrow GL_1(\mathscr{R})^{+}$ be the line segment $\gamma(t) = tA + (1-t)B$. Clearly, $\gamma$ is a continuously differentiable curve with $\gamma '(t) = A - B$ for all $t \in [0, 1]$. If $\varepsilon > 0$ is such that $\varepsilon I \le A$ and $\varepsilon I \le B$, we have that $\varepsilon I \le \gamma(t)$ for any $t \in [0,1]$. Thus we may choose a domain $\Lambda \subseteq \mathbb{C}$ not containing $0$ that is bounded by a curve $\Gamma$ which surrounds the spectra of $ \gamma(t)$ for all $t \in [0,1]$ and does not wind around $0$. On the domain $\Lambda$, $f = \log$ is a holomorphic function. Define $G(t): = \tau( \log ( \gamma (t))))$. Using Lemma \ref{lem:tech-preserver}, we get that $$G'(0+) = \tau(B^{-1}(A-B)) = \tau(B^{-1}A - I) = \tau(B^{-1}A) - 1.$$ As $g(t) = \exp G(t)$, we conclude that $g'(0+) = \exp(G(0)) \cdot G'(0+) = \Delta(B) (\tau(B^{-1}A) - 1).$
\end{proof}

\begin{proof}[Proof of Theorem~\ref{T:preserver}]
Let us begin with the necessity part. Observe that the transformation $\psi: GL_1(\mathscr{R})^{+}\to GL_1(\mathscr{R})^{+}$
defined by $\psi(A) = \phi(I)^{-1/2}\phi(A)\phi(I)^{-1/2}$ is unital. From the multiplicativity of $\Delta$, we see that 
\begin{equation}
\label{eqn:additive}
\Delta(\psi(A)+\psi(B))= \Delta(A+B),\quad \textrm{for } A,B \in GL_1(\mathscr{R})^{+}.
\end{equation}
Plugging $A=B$ into equation (\ref{eqn:additive}), we deduce that $\Delta(\psi(A))=\Delta(A)$ for every $A\in GL_1(\mathscr{R})^{+}$ and thus for a positive real number $\lambda > 0$ we obtain
\[
\begin{gathered}
\Delta(\psi(A)+ \psi( \lambda A)) = \Delta(A + \lambda A) =
\Delta(A)+\Delta(\lambda A) = \Delta(\psi(A))+\Delta(\psi(\lambda A)).
\end{gathered}
\]
An application of Theorem~\ref{thm:minkowski-final} entails that $\psi(\lambda A)=\mu \psi(A)$ for some $\mu > 0$. As noted earlier, $\Delta(\psi(A))=\Delta(A)$ which implies $\lambda=\mu$ meaning that $\psi$ is positive homogeneous. For $A,B\in GL_1(\mathscr{R})^{+}$ and a number $t\in ]0,1[$, we get that
\begin{align*}
\Delta(t\psi(A)+(1-t)\psi(B))&= \Delta\left(\psi(tA) + \psi((1-t)B)\right)\\
&=\Delta(tA+(1-t)B).
\end{align*}
For $s \in \{ 0, 1\}$, as $\Delta(\psi(A) ) = \Delta(A)$, $\Delta(\psi(B)) = \Delta(B)$, it follows that $\Delta(s\psi(A)+(1-s)\psi(B) ) = \Delta( sA + (1-s) B )$. In summary, we have for $t \in [0, 1]$ that $\Delta(t\psi(A) + (1-t) \psi(B) ) = \Delta(tA + (1-t)B)$.
Taking the derivative of both sides with respect to $t$ at $0+$, and using Corollary \ref{cor:tech-preserver}, we obtain that
\begin{equation} \label{preserver}
\tau(\psi(B)^{-1}\psi(A)) = \tau(B^{-1}A),\textrm{ for all } A,B\in GL_1(\mathscr{R})^{+}.
\end{equation}
The right hand side of \eqref{preserver} is additive in the variable $A$. As $B$ runs through the whole of $GL_1(\mathscr{R})^{+}$, substituting $X = \psi(B)^{-1}$, it follows from \eqref{preserver} that for all $A, C, X\in GL_1(\mathscr{R})^{+}$, we must have
\[
\tau(X\psi(A+C))=\tau(X \psi(A) ) + \tau(X \psi(C)),
\]
or, equivalently,
\[
\tau(X[\psi(A + C)-(\psi(A)+\psi(C))])=0.
\]
Since a self-adjoint operator $X$ in $\mathscr{R}$ may be written as the difference of two invertible positive operators $X + (\|X\| + \varepsilon)I, (\|X\| + \varepsilon)I $ for $\varepsilon > 0$, we further have that $\tau(X[\psi(A + C)-(\psi(A)+\psi(C))]) = 0$  for all $A, C \in GL_1(\mathscr{R})^{+}$ and all self-adjoint operators $X$ in $\mathscr{R}$. Choosing $X = \psi(A + C)-(\psi(A)+\psi(C))$ and using the faithfulness of the tracial state $\tau$, we conclude that $\psi(A + C)-(\psi(A)+\psi(C)) = 0$ for all $A, C \in GL_1(\mathscr{R})^{+}$. Thus, $\psi$ is an additive bijection.
The structure of such maps is described in \cite{kloop}. According to \cite[Lemma 8]{kloop} there is a Jordan $*$-isomorphism $J:\mathscr{R}\to \mathscr{R}$ such that $\psi(A) = J(A)$ for all $A\in GL_1(\mathscr{R})^{+}$. The desired $\tau$-preserving property also follows from  \eqref{preserver}. Setting $T:=\sqrt{\phi(I)}$ completes the necessity part.

We next prove the sufficiency. It is well-known that for a Jordan $*$-homomorphism $J$ on $\mathscr{R}$ and a continuous function $f$ defined on the spectrum of a self-adjoint operator $A$ in $\mathscr{R}$, we have $J(f(A))=f(J(A)).$ As $J$ is assumed to be $\tau$-preserving, we observe that $\Delta(J(A)) = \Delta(A)$, by taking $f = \log$. With these considerations in mind and by the multiplicativity of $\Delta$ we finally conclude that, if there is a $\tau$-preserving Jordan $*$-homomorphism $J : \mathscr{R} \rightarrow \mathscr{R}$ and an operator $T$ in $GL_1(\mathscr{R})^{+}$ such that $\Phi(A) = TJ(A)T$ for all $A \in GL_1(\mathscr{R})^{+}$, then we must have  $$\Delta(\phi(A + B)) = \Delta(T) \Delta(J(A+B)) \Delta(T) = \Delta(T^2) \Delta(A+B) =  \Delta(\phi(I)) \cdot \Delta(A + B).$$ 
\end{proof}

In the particular case of finite \emph{factors} (that is, von Neumann algebras with trivial center $\mathbb{C}I$), the structure of Jordan $*$-automorphisms is quite straightforward from \cite[Theorem I]{herstein} as finite factors are simple rings (see \cite[Corollary 6.8.4]{kadison-ringrose}). This helps us elucidate the solution to the preserver problem considered in this paper in a simple manner. Note that $*$-automorphisms and $*$-anti-automorphisms of a finite factor are trace preserving.

\begin{cor}
\textsl{
Let $\mathscr{R}$ be a finite factor and $\phi: GL_1(\mathscr{R})^{+}\to GL_1(\mathscr{R})^{+}$ be a bijective map. Then we have
\[
\Delta(\phi(A)+\phi(B))=\Delta(\phi(I))\cdot \Delta(A+B),\textrm{ for all } A,B \in GL_1(\mathscr{R})^{+}
\]
if and only if there is a $*$-automorphism (or $*$-anti-automorphism) $\theta$ of $\mathscr{R}$ and a positive invertible element $T\in GL_1(\mathscr{R})^{+}$ such that
\[
\phi(A)=T\theta(A)T, \quad A\in GL_1(\mathscr{R})^{+}.
\] }
\end{cor}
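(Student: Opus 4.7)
The plan is to reduce the corollary to Theorem~\ref{T:preserver} and then refine the structure of the Jordan $*$-isomorphism using the fact that a finite factor is a simple ring. For the necessity direction, I would first apply Theorem~\ref{T:preserver} directly to obtain a $\tau$-preserving Jordan $*$-isomorphism $J:\mathscr{R}\to \mathscr{R}$ and a positive invertible operator $T \in GL_1(\mathscr{R})^{+}$ with $\phi(A) = TJ(A)T$. The bulk of the remaining work is to show that in the finite factor setting, such a $J$ decomposes as either a $*$-automorphism or a $*$-anti-automorphism.

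For that, I would appeal to Herstein's theorem (\cite[Theorem I]{herstein}), which asserts that a Jordan homomorphism from a ring into a prime ring is either a homomorphism or an anti-homomorphism. Since a finite factor is a simple ring (\cite[Corollary 6.8.4]{kadison-ringrose}), it is in particular prime, so this dichotomy applies to $J$. Combining with the $*$-preserving property built into Jordan $*$-isomorphism, I conclude that $J$ is either a $*$-automorphism or a $*$-anti-automorphism $\theta$ of $\mathscr{R}$. Setting $T = \sqrt{\phi(I)}$ exactly as in the proof of Theorem~\ref{T:preserver} yields the claimed representation.

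For the sufficiency direction, I would remark that a finite factor admits a unique faithful tracial state, so any $*$-automorphism or $*$-anti-automorphism $\theta$ is automatically $\tau$-preserving: indeed, $\tau \circ \theta$ is again a faithful tracial state (tracial because $\theta$ either preserves or reverses products while $\tau$ is tracial), and uniqueness forces $\tau \circ \theta = \tau$. Hence $\theta$ fits into the framework of Theorem~\ref{T:preserver}, and the verification that $\phi(A) = T\theta(A)T$ satisfies the functional equation $\Delta(\phi(A)+\phi(B)) = \Delta(\phi(I))\cdot \Delta(A+B)$ proceeds verbatim as in that theorem using multiplicativity of $\Delta$ and the spectral identity $\Delta(\theta(A)) = \Delta(A)$.

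I do not expect a genuine obstacle here, since the heavy lifting has already been done in Theorem~\ref{T:preserver}; the only delicate point is verifying the hypotheses of Herstein's theorem (primeness of a finite factor viewed purely as a ring) and ensuring that an anti-homomorphism emerging from the Jordan dichotomy is still $*$-preserving, both of which are immediate from the setup.
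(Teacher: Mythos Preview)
Your proposal is correct and mirrors the paper's own argument: apply Theorem~\ref{T:preserver} to obtain the Jordan $*$-isomorphism, then invoke Herstein's theorem together with the simplicity of finite factors (\cite[Corollary 6.8.4]{kadison-ringrose}) to force the automorphism/anti-automorphism dichotomy, and for sufficiency use uniqueness of the tracial state on a finite factor to obtain $\tau$-preservation. The paper records exactly these ingredients in the paragraph preceding the corollary and offers no further details.
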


\section{Acknowledgement}
The first author was supported by the National Research, Development and Innovation Office NKFIH Reg. No. K-115383. 
\medskip

\end{document}